\newtheorem{theo}{Theorem}
\newtheorem{theorem}{Theorem}
\newtheorem{lemma}[theorem]{Lemma}
\newtheorem{corollary}[theo]{Corollary}
\newenvironment{proof}{\par \noindent \textbf{Proof: }}{\QED \par \bigskip \par}
\newcommand{\QED}{\hfill$\square$}
\begin{document}

\baselineskip=0.30in

\vspace*{30mm}

\begin{center}
{\Large \bf \boldmath On the Maximum Sigma Index of $k$-Cyclic Graphs}

\vspace{10mm}

{\large \bf  Akbar Ali$^{1,}\footnote{Corresponding author}$, Abeer M. Albalahi$^{1}$, Abdulaziz M. Alanazi$^{2}$, Akhlaq A. Bhatti$^{3}$, Amjad E. Hamza$^{1}$}

\vspace{6mm}

\baselineskip=0.23in

$^1${\it Department of Mathematics, Faculty of Science,\\ University of Ha\!'il, Ha\!'il, Saudi Arabia}\\
{\tt akbarali.maths@gmail.com, a.albalahi@uoh.edu.sa, aboaljod2@hotmail.com}\\[3mm]
$^2${\it School of Mathematics, University of Tabuk,\\ Tabuk 71491, Saudi Arabia}\\
{\tt am.alenezi@ut.edu.sa}\\[3mm]
$^3${\it Department of Sciences and Humanities,\\ National University of Computer and Emerging Sciences, Lahore, Pakistan}\\
{\tt akhlaq.ahmad@nu.edu.pk}

\makeatletter

\def\@makefnmark{}

\makeatother


\vspace{4mm}

\baselineskip=0.23in

{\bf Abstract}

\end{center}
\noindent
Let $G$ be a graph with edge set $E(G)$. Denote by $d_w$ the degree of a vertex $w$ of $G$. The sigma index of $G$ is defined as $\sum_{uv\in E(G)}(d_u-d_v)^2$. A connected graph of order $n$ and size $n+k-1$ is known as a connected $k$-cyclic graph. Abdo, Dimitrov, and Gutman [Discrete Appl. Math. 250 (2018) 57--64] characterized the graphs having the greatest sigma index over the family of all connected graphs of a fixed order. The primary goal of the present note is to determine graphs possessing the greatest sigma index from the class of all connected $k$-cyclic graphs of a fixed order.   \\[3mm]
{\bf Keywords:} irregularity; Albertson index; sigma index; topological index.
\\[3mm]
{\bf AMS Subject Classification:} 05C07, 05C90.

\baselineskip=0.35in

\section{Introduction}

Only finite graphs are considered in this study. The graph-theory terms that are used in this study, without providing their definitions, may be found in \cite{Bondy08,Harary69,Chartrand16}.

Although, regular graphs are usually thought of as graphs with the same vertex degrees (this definition of regular graphs is used throughout this paper), but a more general perspective is: graphs with some common property across their structure. Since the beginning of graph theory, regular graphs have been a major topic of research.
On the other hand, the study of graphs possessing a characteristic opposite to regularity has also attracted a much attention from researchers over the last couple of decades; such a study may be regarded as irregularity in graphs \cite{Ali-book}. To the best of authors' knowledge, the first paper devoted thoroughly to the concept of irregularity was written by Bezhad and Chartrand \cite{Bezhad-67}

For a graph $G$, its irregularity measure (IM) is a non-negative graph invariant fulfilling the property: every component of $G$ is regular if and
only if $IM=0$. Irregularity measures play a significant role in chemistry \cite{Reti-18,Gutman-05} and in network theory
\cite{Criado,Estrada-10,Estrada-10b,Snijders-81}.
One of the best known irregularity measures is the one introduced by Albertson \cite{Albertson-97} (which is often referred to as  the Albertson index):
\[
A(G) = \sum_{vw\in E(G)} |d_v-d_w|
\]
where $E(G)$ represents the edge set of a graph $G$ and $d_u$ denotes the degree of a vertex $u$ in $G$. In order to overcome the  Albertson index's several drawbacks, Abdo et al. \cite{Abdo-14} devised an extended version of the  Albertson index and named it as the total irregularity index. For a graph $G$, its total irregularity index is defined as
\[
irr_t(G) = \sum_{\{v,w\}\subseteq  E(G)} |d_v-d_w|.
\]

The present paper is mainly concerned with the sigma index \cite{Furtula-15,Gutman-18}, that is another variant of the Albertson index. For a graph $G$, its sigma index is denoted by $\sigma(G)$ and is defined as
\begin{equation*}
\sigma(G)=\sum_{uv\in E(G)}(d_{u}-d_{v})^{2}.
\end{equation*}

The study of the sigma index was explicitly initiated by Gutman et al. \cite{Gutman-18} where they not only established some fundamental properties of the sigma index but also proved that the sigma index of every graph is an even integer and constructed graph classes for which this index attains every positive even integer. R\'eti \cite{Reti-19-AMC} compared the sigma index with a couple of well-known irregularity measures and reported several interesting properties of this index.
Some additional mathematical characteristics of the sigma index can be found in the recent paper \cite{Lin-21} where the general Albertson index was proposed and studied. The main motivation of the present paper comes from the article \cite{Abdo-18} where the graphs having the greatest sigma index were characterized over the family of all connected graphs of a fixed order. In this paper,  the graphs possessing the greatest sigma index over the class of all connected $k$-cyclic graphs of a fixed order are determined, where a connected $k$-cyclic graph is a connected graph of order $n$ and size $n+k-1$.

\section{Main Results}

For a vertex $v$ in a graph $G$, denote by $N(v)$ the the set of all those vertices of $G$ that are adjacent to $v$. The members of $N(v)$ are known as neighbors of $v$.  A vertex $u\in V(G)$ of degree zero (one) is known as an isolated vertex (pendent vertex, respectively). The star graph with $n$ vertices is denoted by $S_n$. A graph of size zero is known as an edgeless graph.

\begin{lemma}\label{lem-2S-new-more}
For any graph $G$ of order $n$ and size $m$, with $0 \le m \le n-1$, the following inequality holds
\begin{equation*}
\sigma(G)\leq m(m-1)^{2}
\end{equation*}
with equality if and only if $G$ is either an edgeless graph \emph{(}for $m=0$\emph{)} or it consists of the star $S_{m+1}$ together with $n-m-1$ isolated vertices \emph{(}for $m\ge1$\emph{)}.
\end{lemma}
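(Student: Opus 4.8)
The plan is to bound the contribution of each edge separately and then sum over all edges. The crucial observation is that since $G$ has exactly $m$ edges, every vertex $w$ is incident to at most $m$ edges and hence $d_w \le m$; moreover, for any edge $uv$ both endpoints have degree at least $1$. Consequently, for each $uv \in E(G)$ the two degrees lie in the set $\{1, 2, \dots, m\}$, which forces $|d_u - d_v| \le m-1$ and therefore $(d_u - d_v)^2 \le (m-1)^2$. Summing this inequality over all $m$ edges immediately yields $\sigma(G) \le m(m-1)^2$. (The hypothesis $m \le n-1$ plays no role in this inequality itself, but it is exactly what allows the claimed extremal graph, which needs $m+1$ vertices for its star, to fit inside $n$ vertices.)

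For the equality characterization, I would first dispose of the degenerate case $m=0$, where both sides vanish and $G$ is edgeless by definition. For $m \ge 1$, equality in the bound above requires every single edge to attain $(d_u - d_v)^2 = (m-1)^2$. Since the endpoint degrees lie in $\{1, \dots, m\}$, assuming without loss of generality that $d_u \ge d_v$ gives $d_u - d_v = m-1$ together with $d_v \ge 1$ and $d_u \le m$; these constraints pin down $d_u = m$ and $d_v = 1$. Thus, in the equality case, every edge joins a vertex of degree $m$ to a vertex of degree $1$.

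It remains to deduce the global structure from this local condition, which is the only mildly delicate step. Pick any edge and let $v$ be its endpoint of degree $m$. Because $v$ has degree $m$ while $G$ has only $m$ edges in total, every edge of $G$ must be incident to $v$; consequently, using $m \ge 2$ so that the values $m$ and $1$ are distinct, each neighbor of $v$ is forced by the local condition to have degree $1$. (For $m=1$ the single edge is simply $S_2$, which is trivially of the claimed form.) Therefore the edges of $G$ are precisely the $m$ edges from $v$ to its $m$ pendent neighbors, so that these $m+1$ vertices induce a copy of $S_{m+1}$, and the remaining $n-m-1$ vertices, being incident to no edge, are isolated. Conversely, a direct computation confirms that $S_{m+1}$ together with $n-m-1$ isolated vertices has sigma index $m(m-1)^2$, so the characterization is complete. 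The point requiring care is verifying that the degree-$m$ vertex genuinely absorbs all edges of $G$; everything else is elementary.
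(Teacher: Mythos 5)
Your proof is correct, and it takes a genuinely different and more elementary route than the paper. The paper proves the lemma by induction on $m$: it selects an edge $uv$ maximizing $|d_u-d_v|$, computes $\sigma(G)-\sigma(G-uv)$ explicitly via the neighborhoods of $u$ and $v$, bounds this difference by a two-variable function that is maximized at $(d_u,d_v)=(m,1)$, and then invokes the inductive hypothesis on $G-uv$; the equality analysis requires tracking when each of several chained inequalities is tight. You instead observe that every degree lies in $\{1,\dots,m\}$ because the whole graph has only $m$ edges, so each edge contributes at most $(m-1)^2$, and summing gives the bound in one line; equality then forces every edge to join a degree-$m$ vertex to a pendent vertex, and the degree-$m$ vertex absorbs all $m$ edges, which pins down the star. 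Your handling of the delicate points is sound: the cases $m=0,1$ are dispatched separately, the deduction $d_u=m$, $d_v=1$ from $d_u-d_v=m-1$ with $1\le d_v\le d_u\le m$ is airtight, and the "absorbs all edges" step is exactly the right observation. What your approach buys is brevity and a transparent equality case; what the paper's buys is a reusable edge-deletion template that also works in settings where a naive per-edge bound is not simultaneously attainable by all edges (here it is attainable, precisely because the star realizes the extreme degree pair $(m,1)$ on every edge at once, which is why your simpler argument closes).
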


\begin{proof}
The integer $n$ is chosen to be fixed. The result is proved by using the mathematical induction on $m$. For $m =0$, the graph $G$ is the edgeless graph and for $m=1$ the graph $G$ consists of the star $S_{2}$ together with $n-2$ isolated vertices; in both cases, it holds that $\sigma(G)= m(m-1)^{2}=0$ and thence the induction starts.
Next, assume that $m = k\ge 2$. Choose an edge $uv\in E(G)$ such that $d_u\ge d_v$ and
\begin{equation}\label{eq-new-aal}
d_u-d_v = \max\big\{ |d_w - d_{w'}| :~ w'w \in E(G) \big\}.
\end{equation}
Take $Z=N(u)\cap N(v)$, $X=N(u)\setminus (Z\cup \{v\})$, and $Y=N(v)\setminus (Z\cup \{u\})$. Then, one has
\begin{eqnarray}\label{Eq-SSS}
\sigma(G)-\sigma(G-uv) &=& (d_{u}- d_{v})^{2} + \sum_{x\in X}\Big[(d_{u}- d_{x})^{2} - (d_{u}-1- d_{x})^{2}\Big]  \nonumber\\[2mm]
&&+ \sum_{y\in Y}\Big[(d_{v}- d_{y})^2 - (d_{v}-1- d_{y})^2\Big] \nonumber\\[2mm]
&& + \sum_{z\in Z}\Big[(d_{u}- d_{z})^{2} - (d_{u}- 1-d_{z})^{2} + (d_{v}- d_{z})^{2} - (d_{v}-1 -d_{z})^{2}\Big] \nonumber\\[2mm]
&=& (d_{u}- d_{v})^{2} + \sum_{x\in X}\Big[2(d_{u}- d_{x}) - 1\Big]  + \sum_{y\in Y}\Big[2(d_{v}- d_{y}) - 1\Big] \nonumber\\[2mm]
&& + 2\sum_{z\in Z}\Big[(d_{u}- d_{z}) + (d_{v}- d_{z}) - 1 \Big],
\end{eqnarray}
where every degree notation represents the degree in $G$ (not in $G-uv$).
By making use of Equation \eqref{eq-new-aal} in Equation \eqref{Eq-SSS}, one gets
\begin{eqnarray}\label{Eq-SSS-p0o}
\sigma(G)-\sigma(G-uv) &\le& (d_{u}- d_{v})^{2} + \sum_{x\in X}\Big[2(d_{u}- d_{v}) - 1\Big]  + \sum_{y\in Y}\big[2(d_{v}- d_{v}) - 1\big] \nonumber\\[2mm]
&& + 2\sum_{z\in Z}\Big[2(d_{u}- d_{v}) - 1 \Big]\nonumber\\[2mm]
&=& (d_{u}- d_{v})^{2} + \big(|X|+|Y|+2|Z|\big)\big[2(d_{u}- d_{v}) - 1\big]  .
\end{eqnarray}
Since $|X| + |Y| + 2|Z|=d_u+d_v - 2$, from \eqref{Eq-SSS-p0o} it follows that
\begin{eqnarray}\label{Eq-SSS-p0o-9}
\sigma(G)-\sigma(G-uv) &\le& (d_{u}- d_{v})^{2} + \big(d_{u}+ d_{v}-2\big)\big[2(d_{u}- d_{v}) - 1\big].
\end{eqnarray}
Since $k\ge2$, one has $d_u\ge2$.
Note that the function $\phi$ defined by
\[
\phi(t_1,t_2)= (t_1- t_2)^{2} + \big(t_1+ t_2-2\big)\big[2(t_1- t_2) - 1\big],
\]
with $k\ge t_1\ge t_2\ge 1$ and $t_1\ge2$, is strictly increasing in $t_1$ and strictly decreasing in $t_2$. This implies that $\phi(t_1,t_2)\le \phi(k,1)$ with equality if and only if $t_1=k, t_2= 1,$ and thence  \eqref{Eq-SSS-p0o-9} gives
\begin{eqnarray}\label{Eq-SSS-p0o-91}
\sigma(G)-\sigma(G-uv) &\le& (k- 1)^{2} + (k-1)(2k- 3)
\end{eqnarray}
Because of the inductive hypothesis, \eqref{Eq-SSS-p0o-91} yields
\begin{eqnarray}\label{Eq-SSS-p0o-912}
\sigma(G) &\le& (k- 1)^{2} + (k-1)(2k- 3) + (k-1)(k-2)^2 = k(k-1)^2.
\end{eqnarray}
Take $E'\subseteq E(G)$ such that if $rs\in E'$ then either $r=u$ or $s=v$.
From \eqref{Eq-SSS-p0o}, \eqref{Eq-SSS-p0o-9}, \eqref{Eq-SSS-p0o-91}, and \eqref{Eq-SSS-p0o-912},
we observe that the equation $\sigma(G) = k(k-1)^2$ holds if and only if
\begin{description}
  \item[(i).] the equation $d_u-d_v=|d_r-d_s|$ is satisfied for every edge $rs\in E'$,
  \item[(ii).] the vertex $u$ has the degree $k$ and the vertex $v$ is pendent,
  \item[(iii).] the graph $G-uv$ consists of the star $S_{k}$ together with $n-k$ isolated vertices.
\end{description}
Therefore, the induction (and hence the proof) is completed.

\end{proof}

The maximum degree of a graph $G$ is denoted by $\Delta(G)$.

\begin{lemma}\label{lem-0.5}
If $G$ is a graph possessing the greatest sigma index over the family of all connected $k$-cyclic graphs of a fixed order $n$, then $\Delta(G)=n-1$.
\end{lemma}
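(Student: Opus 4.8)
The plan is to argue by contradiction. Assume $G$ has the greatest sigma index among all connected $k$-cyclic graphs of order $n$, but that $\Delta(G)=d_u\le n-2$ for a vertex $u$ of maximum degree. Then $u$ has at least one non-neighbour, so the set $M$ of vertices different from $u$ and non-adjacent to $u$ is non-empty. Since $G$ is connected and $u$ has no neighbour in $M$, some edge must join $N(u)$ to $M$; I would fix such an edge $ab$ with $a\in N(u)$ and $b\in M$, selecting it so as to keep the forthcoming ``loss'' small (for instance by taking $d_a$, or $d_b$, as small as the structure allows).

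First I would verify that the competitor $G'=G-ab+ub$ is admissible. It has the same order as $G$, and since exactly one edge is deleted and one inserted, the same size; so if it is connected it is again $k$-cyclic. Connectivity holds because removing $ab$ keeps $u$ and $a$ together through the edge $ua$, and should this detach $b$, the inserted edge $ub$ restores a path from $b$ to $u$. Thus $G'$ is a connected $k$-cyclic graph of order $n$, whence maximality of $G$ would give $\sigma(G)\ge\sigma(G')$; I aim to contradict this by showing $\sigma(G')>\sigma(G)$.

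Next I would expand $\sigma(G')-\sigma(G)$ edge by edge. Only $d_u$ (increased by $1$) and $d_a$ (decreased by $1$) change, while $d_b$ stays the same, so the difference splits into three parts: the swap term $(d_u+1-d_b)^2-(d_a-d_b)^2$ from replacing $ab$ by $ub$; the gains $\big(4(d_u-d_a)+4\big)+\sum_{x\in N(u)\setminus\{a\}}\big(2(d_u-d_x)+1\big)$ contributed by the edges incident to $u$, every term of which is nonnegative because $d_u=\Delta$; and the loss $\sum_{y\in N(a)\setminus\{u,b\}}\big(1-2(d_a-d_y)\big)$ from the edges incident to $a$. The gains at $u$ are always positive, and the swap term is positive as soon as $d_b\le d_a$ (since then $|d_a-d_b|\le d_u-d_b<d_u+1-d_b$), so the entire difficulty is concentrated in the loss term at $a$.

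The main obstacle is precisely this loss term, together with arranging the selection of $ab$ so that it and the swap term are both controlled. Lowering $d_a$ shrinks the degree gaps on the edges at $a$, so if $a$ were a high-degree vertex with many low-degree neighbours the relocation could fail to raise $\sigma$; dually, a large $d_b$ could spoil the swap term. The minimal-degree choice of the endpoints is what I expect to tame both effects: when $d_a=2$ the loss sum is empty, and in general keeping $d_a$ small both bounds the loss and forces the neighbours of $a$ to feed the positive terms at $u$. I would finish the estimate with a monotonicity argument in the spirit of the auxiliary function $\phi$ used in the proof of Lemma~\ref{lem-2S-new-more}, handling separately the few ``near-regular'' configurations (where $d_u$ is small, or all neighbours of $u$ have large degree) by comparing $\sigma(G)$ directly with the sigma index of the corresponding graph that possesses a universal vertex.
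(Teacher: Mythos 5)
Your setup, the verification that $G'=G-ab+ub$ is again a connected $k$-cyclic graph of order $n$, and the three-part decomposition of $\sigma(G')-\sigma(G)$ are all correct, and the underlying idea (relocate an edge from a neighbour of the maximum-degree vertex onto that vertex) is the same as the paper's. But the argument stops exactly where the work begins: the inequality $\sigma(G')>\sigma(G)$ is never established, only deferred to an unspecified ``monotonicity argument'' and a case analysis of ``near-regular configurations'' that is not carried out. This is not a routine omission, because the single-edge move with your stated selection rule (take $d_a$ as small as the structure allows) can genuinely \emph{decrease} $\sigma$. Concretely, let $n=20$ and let $G$ consist of two nonadjacent vertices $u,b$, a vertex $a$ adjacent only to $u$ and $b$, and a clique $x_1,\dots,x_{17}$ with every $x_i$ adjacent to both $u$ and $b$. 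Then $\Delta(G)=d_u=18=n-2$, $d_b=d_{x_i}=18$, $d_a=2$, and $\sigma(G)=2\cdot 16^2=512$. The only edges joining $N(u)$ to $M=\{b\}$ are $ab$ and the $x_ib$, and your rule selects $ab$; the resulting graph has $\sigma(G-ab+ub)=18^2+17+1=342<512$, because the swap term $(d_u+1-d_b)^2-(d_a-d_b)^2=1-256$ overwhelms the gains at $u$. (Choosing $x_1b$ instead does increase $\sigma$ here, which shows your heuristic points in the wrong direction: the swap term is ruinous precisely when $d_a$ is small and $d_b$ is large.) So a genuinely new idea is needed to finish, and the proposal does not supply one.

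The paper's way around this difficulty is structural rather than a cleverer choice of a single edge: it fixes a neighbour $v'$ of the maximum-degree vertex $v$ having at least one neighbour outside $N(v)\cup\{v\}$ and redirects to $v$ \emph{all} $p$ edges from $v'$ to $N(v')\setminus(N(v)\cup\{v\})$ simultaneously. After this batch move, every remaining neighbour of $v'$ other than $v$ lies in $N(v)\cap N(v')$, so each loss incurred on an edge at $v'$ is paired with a gain on the corresponding edge at $v$, the pairs contributing $2p(d_v-d_{v'}+p)\ge 0$ each; no unpaired ``loss term at $a$'' survives to be estimated. If you wish to salvage a one-edge-at-a-time argument, you would have to prove that \emph{some} admissible edge always yields an increase, which is a different and harder statement than the one you sketch.
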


\begin{proof}
For $n \leq 3$, the result trivially holds. In what follows, we assume that $n \geq 4$.
Contrarily, assume that $v\in V(G)$ such that $d_v= \Delta(G)\le n-2$. Then, $G$ contains vertices $v',u_{1}$ such that $v'v,v'u_{1}\in E(G)$ but $vu_{1}\not\in E(G)$. Take $N(v')\setminus N(v):=\{v,u_{1},u_{2},\ldots,u_{p}\}$ where $p\geq1$. Also, take $A:=N(v)\setminus (N(v')\cup \{v'\})$ and $B:=N(v)\cap N(v')$.
Construct a new graph $G'$ from $G$ by dropping the edges
$u_{1}v',u_{2}v',\ldots,u_{p}v'$ and inserting the edges $u_{1}v,u_{2}v,\ldots,u_{p}v$.
In the remaining proof, by the vertex degree $d_{t}$ we mean degree of the vertex $t$ in the graph $G$.
One has
\begin{eqnarray}\label{Eq.2}
\sigma(G')-\sigma(G)&=&\sum_{ a\in A}\Big[(d_{v}+p-d_{a})^2-(d_{v}-d_{a})^2\Big]  \nonumber\\[2mm]
&&+\sum_{b\in B}\Big[(d_{v}+p-d_{b})^2-(d_{v}-d_{b})^2\Big]  \nonumber \\[2mm]
&&+\sum_{b\in B}\Big[(d_{v'}-p-d_{b})^2-(d_{v'}-d_{b})^2\Big]  \nonumber \\[2mm]
&&+\sum_{i=1}^{p}\Big[(d_{v}+p-d_{u_{i}})^2-(d_{v'}-d_{u_{i}})^2\Big]   \nonumber \\[2mm]
&&+(d_{v}+2p-d_{v'})^2-(d_{v}-d_{v'})^2  \nonumber\\[2mm]
&=&\sum_{ a\in A}\Big[(d_{v}+p-d_{a})^2-(d_{v}-d_{a})^2\Big]  \nonumber\\[2mm]
&&+\sum_{b\in B}2p(d_v - d_{v'} +p)  \nonumber \\[2mm]
&&+\sum_{i=1}^{p}\Big[(d_{v}+p-d_{u_{i}})^2-(d_{v'}-d_{u_{i}})^2\Big]   \nonumber \\[2mm]
&&+4p(d_v - d_{v'} +p).
\end{eqnarray}
Since $d_v= \Delta(G)$, from \eqref{Eq.2} it follows that $\sigma(G')-\sigma(G)>0$, which is a contradiction to the maximality of $\sigma(G)$ as $G'$ is also a connected $k$-cyclic graph with $n$ vertices.

\end{proof}

In order to prove the main results, we also need the following known result that is similar to Lemma \ref{lem-2S-new-more}.

\begin{lemma}\label{lem-2S-new-more-n}
{\rm \cite{Xu-14}} For any graph $G$ of order $n$ and size $m$, with $1 \le m \le n-1$, the following inequality holds
\begin{equation*}
M_1(G)\leq m(m+1)
\end{equation*}
with equality if and only if $G$ consists of the star $S_{m+1}$ together with $n-m-1$ isolated vertices.
\end{lemma}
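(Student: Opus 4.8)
The plan is to work with the edge form of the first Zagreb index, $M_1(G)=\sum_{v\in V(G)}d_v^2=\sum_{uv\in E(G)}(d_u+d_v)$, which reduces the global bound to a uniform estimate on each individual edge. The essential local inequality is $d_u+d_v\le m+1$ for every edge $uv\in E(G)$. To justify it, observe that there are $d_u$ edges incident with $u$ and $d_v$ edges incident with $v$, while the edge $uv$ itself is the only one counted in both totals; hence the number of distinct edges meeting the pair $\{u,v\}$ equals $d_u+d_v-1$, and this cannot exceed the total number $m$ of edges. Summing $d_u+d_v\le m+1$ over all $m$ edges yields $M_1(G)\le m(m+1)$ directly, and this step needs only $m\ge1$, not the full hypothesis $m\le n-1$.

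For the characterization of equality, the bound is met exactly when $d_u+d_v=m+1$ on every edge, that is, when each edge of $G$ shares an endpoint with every other edge, so that the edge set is an intersecting family of two-element sets. I would then appeal to the elementary dichotomy for intersecting families of edges: either all the edges pass through one common vertex, giving a star, or they are precisely the three edges of a single triangle. In the star case the non-isolated part is $S_{t+1}$, where $t$ is the number of edges; taking $t=m$ produces the extremal graph named in the statement, and this is exactly the point at which the hypothesis $m\le n-1$ is used, since the $m+1$ vertices of $S_{m+1}$ must be accommodated among the $n$ vertices of $G$, leaving $n-m-1$ isolated vertices.

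I expect the equality characterization, rather than the inequality, to be the delicate part, and within it the triangle branch of the dichotomy to be the step demanding the most care. A triangle forces $m=3$ and has $d_u+d_v=2+2=4=m+1$ on each of its edges, so $K_3$ together with $n-3$ isolated vertices also attains $M_1=12=m(m+1)$ as soon as $n\ge4$; this graph is admissible under $1\le m\le n-1$, yet it is not of the stated form. The cleanest way to organize matters is thus to establish the inequality unconditionally, then run the intersecting-family analysis and test each branch against $1\le m\le n-1$: the star survives for every admissible value of $m$, while the triangle is an isolated exception occurring only at $m=3$, which must be acknowledged or explicitly excluded when stating the equality case.
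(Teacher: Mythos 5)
The paper does not prove this lemma at all: it is imported verbatim from \cite{Xu-14}, so there is no internal argument to compare yours against. Your proof is correct and is the standard one: writing $M_1(G)=\sum_{uv\in E(G)}(d_u+d_v)$ and bounding each summand by $d_u+d_v\le m+1$ (since the $d_u+d_v-1$ edges meeting $\{u,v\}$ cannot outnumber $m$) gives the inequality, and equality forces the edge set to be pairwise intersecting, hence a star or a triangle. More importantly, your triangle observation is a genuine and correct catch, not a mere technicality: for $m=3$ and $n\ge4$ the graph $K_3$ together with $n-3$ isolated vertices satisfies the hypothesis $1\le m\le n-1$ and has $M_1=12=m(m+1)$, yet it is not $S_4$ plus isolated vertices. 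So the equality characterization as quoted in the paper is incomplete at $m=3$ and should read ``\,$G$ is $S_{m+1}$ plus isolated vertices, or $m=3$ and $G$ is $K_3$ plus isolated vertices.'' It is worth adding (as a final sanity check on the paper rather than on your proof) that this omission does not propagate to Theorem \ref{thm-X2-more}: there, equality in \eqref{hyyyygt} requires $M_1(G-v)=k(k+1)$ \emph{and} $\sigma(G-v)=k(k-1)^2$ simultaneously, and the triangle fails the second condition since $\sigma(K_3)=0<12=k(k-1)^2$ for $k=3$, so the star remains the unique extremal configuration downstream.
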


For $n\ge4$, let $H_{n,k}$ be the graph constructed from the star graph $S_n$ by inserting $k$ edge(s) between $u\in V(S_n)$ and $k$ other pendent vertices, where $u$ is a fixed pendent vertex of $S_n$ and $k\ge1$. Also, we take $S_n=H_{n,0}$, see \cite{Ali19}.

\begin{figure}[!ht]
 \centering
  \includegraphics[width=0.4\textwidth]{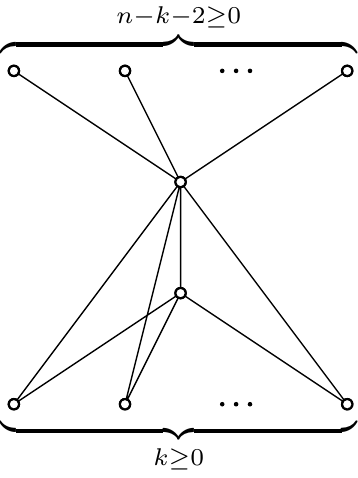}
   \caption{The $k$-cyclic graph $H_{n,k}$.}
    \label{Fig-1}
     \end{figure}

\begin{theorem}\label{thm-X2-more}
For $0\le k\leq n-2$ with $n\ge4$, if $G$ is connected $k$-cyclic graph with $n$ vertices, then
$$
\sigma(G)\le (n-1)(n-2)^2- 2k(2n-5)+ k^2(k-1),
$$
with equality if and only if $G=H_{n,k}$.
\end{theorem}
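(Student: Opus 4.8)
The plan is to reduce everything to the two extremal lemmas already proved, using the dominating vertex supplied by Lemma \ref{lem-0.5}. Let $G$ be a connected $k$-cyclic graph of order $n$ attaining the greatest sigma index in its class. By Lemma \ref{lem-0.5}, $G$ has a vertex $c$ with $d_c=\Delta(G)=n-1$, i.e.\ $c$ is adjacent to every other vertex. Set $H:=G-c$. Since $|E(G)|=n+k-1$ and $c$ contributes $n-1$ edges, $H$ has order $n-1$ and size $k$; as $0\le k\le n-2=(n-1)-1$, both Lemma \ref{lem-2S-new-more} and Lemma \ref{lem-2S-new-more-n} apply to $H$.

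The first step is a degree-shift identity. For a vertex $w\ne c$, write $\delta_w$ for its degree in $H$; then its degree in $G$ is $d_w=\delta_w+1$. Splitting $E(G)$ into the edges incident with $c$ and the edges of $H$, each edge $cw$ contributes $(d_c-d_w)^2=(n-2-\delta_w)^2$, while each edge $ww'$ of $H$ contributes $(d_w-d_{w'})^2=(\delta_w-\delta_{w'})^2$ since the two increments cancel. Summing and expanding $\sum_{w\ne c}(n-2-\delta_w)^2$ with $\sum_{w\ne c}\delta_w=2k$ and $\sum_{w\ne c}\delta_w^2=M_1(H)$ gives
\[
\sigma(G)=\sigma(H)+M_1(H)+(n-1)(n-2)^2-4k(n-2).
\]

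The crux is that the two graph invariants $\sigma(H)$ and $M_1(H)$ are maximized by one and the same graph. Indeed, Lemma \ref{lem-2S-new-more} gives $\sigma(H)\le k(k-1)^2$ and Lemma \ref{lem-2S-new-more-n} gives $M_1(H)\le k(k+1)$, and in both the unique extremal graph is the star $S_{k+1}$ together with $n-k-2$ isolated vertices. Hence $\sigma(H)+M_1(H)\le k(k-1)^2+k(k+1)$, with equality precisely for that graph; substituting into the identity and simplifying $k(k-1)^2+k(k+1)-4k(n-2)=k^2(k-1)-2k(2n-5)$ yields the claimed upper bound.

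For the equality case, the bound is reached only when $H=G-c$ is $S_{k+1}$ plus $n-k-2$ isolated vertices; re-attaching $c$ to all of $V(H)$ turns the centre of $S_{k+1}$ into a vertex of degree $k+1$, its $k$ leaves into degree-$2$ vertices, and the isolated vertices into pendents of $c$, which is exactly the graph $H_{n,k}$. Since $H_{n,k}$ is itself a connected $k$-cyclic graph and a direct computation shows it attains the bound, $G=H_{n,k}$ is the unique maximizer and the inequality holds for every connected $k$-cyclic graph of order $n$. I expect the only genuine subtlety to be the observation that the extremal configurations in Lemmas \ref{lem-2S-new-more} and \ref{lem-2S-new-more-n} coincide, so that both upper bounds are achieved simultaneously; the degree-shift identity and the final polynomial simplification are routine.
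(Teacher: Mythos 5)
Your proposal is correct and follows essentially the same route as the paper: reduce to a graph with a dominating vertex (via Lemma \ref{lem-0.5}), write $\sigma(G)=\sigma(G-c)+M_1(G-c)+(n-1)(n-2)^2-4k(n-2)$, and apply Lemmas \ref{lem-2S-new-more} and \ref{lem-2S-new-more-n}, noting that both are tight simultaneously at $S_{k+1}$ plus isolated vertices. The only cosmetic difference is that you argue via an abstract maximizer while the paper applies the Lemma \ref{lem-0.5} transformation successively to any $G$ with $\Delta(G)<n-1$; the substance is identical.
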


\begin{proof}
If the inequality $\Delta(G)<n-1$ holds, then by using the transformation (successively) used in the proof of Lemma \ref{lem-0.5}, one obtains a connected $k$-cyclic graph $G^{*}$ of order $n$ and maximum degree $n-1$ such that $\sigma(G)<\sigma(G^{*})$. Thereby, it enough to prove the result for the case when $\Delta(G)=n-1$. The result trivially holds for $k=0,1,$ because there is a unique connected graph of order $n$ and maximum degree $n-1$ in each such case. Thus, in what follows, assume that $k\ge2$ and $\Delta(G)=n-1$. Take a vertex $v \in V(G)$ such that $d_v=n-1$. Note that the size $|E(G-v)|$ of the graph $G-v$ is $k$, where $2\leq k\leq n-2= |V(G-v)|-1$. If $d'_w$ and $d_w$ are the degrees of a vertex $w\in V(G-v)$ in $G-v$ and $G$, respectively, then $d'_w =d_w-1$ for every $w\in V(G-v)$. If $E'=E(G-v)$ and $V'= V(G-v)$, then one has
\begin{eqnarray}\label{gtfrdx8-0}
\sigma(G) &=& \sum_{a\in V'}(d_v - d_a)^{2} + \sum_{uw\in E'}(d_u - d_w )^{2}\nonumber\\[2mm]
&=& \sum_{a\in V'}(n - d'_a -2)^{2} + \sum_{uw\in E'}(d'_u - d'_w )^{2}\nonumber\\[2mm]
&=& (n-1)(n-2)^2-4k(n-2)+ M_1(G-v) +  \sigma(G-v).
\end{eqnarray}
By using Lemmas \ref{lem-2S-new-more} and \ref{lem-2S-new-more-n} in \eqref{gtfrdx8-0}, we have
\begin{eqnarray}\label{gtfrdx8-0po}
\sigma(G) &\le& (n-1)(n-2)^2-4k(n-2)+ k(k+1) +  k(k-1)^2\label{hyyyygt}\\[2mm]
&=& (n-1)(n-2)^2- 2k(2n-5)+ k^2(k-1),\nonumber
\end{eqnarray}
where the equality sign in \eqref{hyyyygt} holds if and only if the graph $G-v$ consists of the star $S_{k+1}$ together with $n-1-k-1(=n-k-2)$ isolated vertices; that is, if and only if $G=H_{n,k}$.

\end{proof}

We observe that the graph $H_{n,k}$ exists whenever $0\le k\le n-2$. Thus, Theorem \ref{thm-X2-more} has the following direct consequence.

\begin{corollary}
If $0\le k\leq n-2$ and $n\ge4$, then $H_{n,k}$ uniquely possess the maximum sigma index over the collection of all connected $k$-cyclic graphs of a fixed order $n$.

\end{corollary}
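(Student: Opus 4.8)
The plan is to derive this corollary directly from Theorem \ref{thm-X2-more}, which already supplies both the sharp upper bound and its equality characterization. The only genuine task is to confirm that $H_{n,k}$ is a legitimate member of the family under consideration, so that the extremal graph identified by the theorem is actually attained inside the class of connected $k$-cyclic graphs of order $n$.

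First I would verify that, for every $k$ with $0\le k\le n-2$, the graph $H_{n,k}$ is indeed a connected $k$-cyclic graph on $n$ vertices. By construction $H_{n,k}$ is obtained from the star $S_n$, which is connected and has exactly $n-1$ edges, by inserting $k$ additional edges joining a fixed pendent vertex $u$ to $k$ other pendent vertices. Adding edges cannot disconnect a connected graph, so $H_{n,k}$ is connected, and its size equals $(n-1)+k=n+k-1$, which is precisely the size required of a connected $k$-cyclic graph of order $n$. The construction is well defined exactly when at least $k$ pendent vertices of $S_n$ other than $u$ are available, i.e. when $k\le n-2$; the boundary cases $k=0$ (for which $H_{n,0}=S_n$) and $k=n-2$ are covered as well. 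Hence $H_{n,k}$ belongs to the class of connected $k$-cyclic graphs of order $n$ throughout the stated range, which is exactly the content of the observation preceding the corollary.

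With membership established, the conclusion is immediate. By Theorem \ref{thm-X2-more}, every connected $k$-cyclic graph $G$ of order $n$ satisfies $\sigma(G)\le (n-1)(n-2)^2-2k(2n-5)+k^2(k-1)$, and since $H_{n,k}$ lies in this class and realizes equality, it attains the maximum value of $\sigma$ over the class. The ``only if'' direction of the equality statement in the theorem then forces any graph achieving this maximum to coincide with $H_{n,k}$, which yields uniqueness. Accordingly, there is essentially no substantive obstacle here: all of the analytic work has already been absorbed into the proof of Theorem \ref{thm-X2-more}, and the main (and only) point requiring care is the routine check that $H_{n,k}$ exists and has the correct order and size across the full range $0\le k\le n-2$.
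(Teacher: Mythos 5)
Your proposal is correct and matches the paper's own derivation: the paper likewise notes that $H_{n,k}$ exists (as a connected $k$-cyclic graph of order $n$) for all $0\le k\le n-2$ and then reads off the corollary directly from the bound and equality case of Theorem \ref{thm-X2-more}. Your added verification of the order, size, and connectedness of $H_{n,k}$ is a harmless elaboration of the same one-line observation.
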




\section*{Acknowledgment}
This research has been funded by Scientific Research Deanship, University of Ha\!'il -- Saudi Arabia through project
number RG-22\,002.

\end{document}